\documentclass[12pt, a4paper]{article}
\relpenalty=9999
\binoppenalty=9999
\usepackage{titlesec} 
\usepackage{amsfonts}
\usepackage{amsthm}
\usepackage{amssymb}
\usepackage{amsmath}
\usepackage{theoremref}
\usepackage{enumerate}
\usepackage{bbm}
\usepackage{authblk}
\usepackage{mathtools}
\usepackage{xcolor}

\usepackage{xcolor}

\DeclarePairedDelimiter\abs{\lvert}{\rvert}%
\DeclarePairedDelimiter\norm{\lVert}{\rVert}%

\makeatletter
\let\oldabs\abs
\def\abs{\@ifstar{\oldabs}{\oldabs*}}
\let\oldnorm\norm
\def\norm{\@ifstar{\oldnorm}{\oldnorm*}}
\makeatother

\makeatletter
\g@addto@macro\bfseries{\boldmath}
\makeatother

\newcommand{\A}{\mathcal{A}}
\newcommand{\C}{\mathbb{C}}
\newcommand{\M}{\mathcal{M}}
\newcommand{\N}{\mathcal{N}}
\newcommand{\T}{\mathbb{T}}
\newcommand{\Ca}{\mathcal{C}}
\newcommand{\K}{\mathcal{K}}
\newcommand{\Po}{\mathcal{P}}
\newcommand{\E}{\mathcal{E}}

\newcommand{\z}{\zeta}

\newcommand{\conj}[1]{\overline{#1}}
\newcommand{\D}{\mathbb{D}}

\newcommand{\cD}{\conj{\mathbb{D}}}

\newcommand{\hil}{\mathcal{H}}

\newtheorem{thm}{Theorem}[section]
\newtheorem{lemma}[thm]{Lemma}
\newtheorem{cor}[thm]{Corollary}
\newtheorem{prop}[thm]{Proposition}
\theoremstyle{definition}

\theoremstyle{definition}

\begin{document}
\title{\textbf{An abstract approach to approximations in spaces of pseudocontinuable functions}}
\date{}
\author[1]{Adem Limani}
\author[2]{Bartosz Malman}
\affil[1]{Lund University, Lund, Sweden}
\affil[2]{KTH Royal Institute of Technology, Stockholm, Sweden}
\maketitle

\begin{abstract}
\noindent
We give an abstract approach to approximations with a wide range of regularity classes $X$ in spaces of pseudocontinuable functions $K^p_\vartheta$, where $\vartheta$ is an inner function and $p>0$. More precisely, we demonstrate a general principle, attributed to A. B. Aleksandrov, which asserts that if a certain linear manifold $X$ is dense in the space of pseudocontinuable functions $K^{p_0}_\vartheta$, for some $p_0>0$, then $X$ is in fact dense in $K^p_{\vartheta}$, for all $p>0$. 
Moreover, for a rich class of Banach spaces of analytic functions $X$, we describe the precise mechanism that determines when $X$ is dense in a certain space of pseudocontinuable functions. As a consequence, we obtain an extension of Aleksandrov's density theorem to the class of analytic functions with uniformly convergent Taylor series.
\end{abstract}

\section{Introduction.} 

Let $\D$ be the unit disc in the complex plane $\C$ and $H^p$ denote the classical Hardy spaces for $0<p<\infty$, which consists of analytic functions on $\D$ equipped with the (quasi)norm
\[
\| f\|_{H^p} := \sup_{0<r<1} \left( \int_{\T} |f(r\z)|^p dm(\z) \right)^{\min(1,1/p)} < \infty,
\]
where $m$ is the normalized arc length measure on the unit circle $\T$. We denote by $H^\infty$ the space of bounded analytic functions on $\D$ equipped with the supremum norm. As usual, we will identify functions in $H^p$ with their boundary functions belonging to a closed subspace of $L^p(\T, m)$. We shall assume the readers familiarity with the usual facts regarding the Hardy spaces $H^p$ on $\D$ and the basic facts about the boundary behaviors of functions in these space, for instance see \cite{garnett}. Functions $\vartheta$ belonging to $H^p$ and satisfying the property $|\vartheta(\z)| =1$ for $m$-a.e $\z \in \T$ are called \emph{inner functions} and enjoy the canonical factorization $\vartheta = BS_\mu$, where $B$ is the Blaschke product containing the zeros of $\vartheta$ on $\D$ and $S_\mu$ denotes the singular inner function associated to a finite positive singular Borel measure $\mu$, defined by
\begin{equation*} \label{SingIn} S_\mu(z) := \exp\Big(-\int_\T \frac{\zeta + z}{\zeta - z} d\mu(\z)\Big) \qquad z \in \D.
\end{equation*}
Let $\sigma(\vartheta)$ denote the boundary spectrum of an inner function $\vartheta$, the union of the support of the associated singular measure $\mu$ and the accumulation points of the zeros of $\vartheta$ contained in the Blaschke factor $B$. Then $\vartheta$ extends analytically across $\T \setminus \sigma(\vartheta)$ and its bad boundary behavior of $\vartheta$ is contained in $\sigma(\vartheta)$. Let $H^p_0$ denote the closed subspace of $H^p$ with $f(0)=0$ and for $0<p \leq \infty$, we define the spaces of pseudocontinuable functions $K_\vartheta^p$ as the set of $H^p$-functions $f$ satisfying the conditions

\begin{itemize}
\item[(i)][$\vartheta$-pseudocontinuable]

$f \in H^p \cap \vartheta \conj{H_0^p}$,

\item[(ii)][Analytic continuation]

$f$ has an analytic continuation across $\T \setminus \sigma(\vartheta)$.

\end{itemize}
We note that condition $(i)$ refers to the boundary function and the designation $\vartheta$-pseudocontinuable arises from the following fact. Any function $f(z) = \sum_{n\geq1}f_n z^n$ belonging to $H^p \cap \vartheta \conj{H^p_0}$ satisfies $f\conj{\vartheta} \in \conj{H^p_0}$ in the sense of boundary values, which gives rise to an extension $F_\vartheta (z) := \sum_{n\geq 1} f_n z^{-n}$ of $f\conj{\vartheta}$ to the exterior disk $\C_\infty \setminus \cD := \{z \in \C : |z| >1 \} \cup \{\infty\}$, which satisfies the properties that $F_\vartheta$ vanishes at $\infty$, $\lim_{r \to 1+} F_\vartheta(r\z) =\lim_{r\to 1-} f(r\z)/\vartheta(r\z) $, for $m$-a.e $\z \in \T$, and $F_\vartheta$ belongs to the Hardy space $H^p$ on $\C_\infty \setminus \cD$, that is 
\[
\sup_{r>1} \int_{\T} |F_\vartheta (r\z)|^p dm(\z) < \infty.
\]
See \cite{cima2000backward} for details. For $p\geq 1$, every $\vartheta$-pseudocontinuable function actually has an analytic continuation across $\T \setminus \sigma(\vartheta)$, thus item $(ii)$ is redundant in this case and one has $K_\vartheta^p := H^p \cap \vartheta \conj{H^p_0}$. However,  for $0<p<1$ this is no longer true, since for any $\z \in \T$, the rational functions $1/(1-\conj{\z}z)$ belong to $H^p \cap \vartheta \conj{H^p_0}$. We remark that it is not immediately clear that $(i)$ and $(ii)$ actually make $K^p_\vartheta$ into a closed subspace of $H^p$. For instance, this follows from Proposition 6.2.1 in \cite{cima2000backward}, a survey which treats the work of A. B. Aleksandrov in \cite{aleksandrov1979invariant} on backward shift invariant subspaces on $H^p$ for $0<p<1$.  

We denote by $X$ a linear manifold of analytic functions on $\D$ with the property that 
functions in $X$ enjoy some degree of regular extensions to the boundary $\T$, and we will loosely refer to them as regularity classes $X$. Our main purpose in this manuscript is to investigate, for a wide range of regularity classes $X$, satisfying some appropriate assumptions depending on the specific context, when the linear manifolds $X \cap K^p_\vartheta$ are dense in $K^p_\vartheta$. In the case $p=\infty$, this should be understood in the sense of the weak-star topology on $K^\infty_\vartheta$, inherited from $L^\infty(\T,m)$. The most well-known result within this category of approximation problems on spaces of pseudocontinuable functions is Aleksandrov's density theorem, which says that the disk algebra $\A$, consisting of analytic functions in $H^\infty$ with continuous boundary extensions to $\T$, is always dense in spaces of pseudocontinuable functions (see \cite{aleksandrovinv}). 
This result was later generalized to the setting of de Branges-Rovnyak spaces in \cite{comptesrenduscont}. For a wide range of regularity classes $X$, results for $K^2_\vartheta$ were recently treated in \cite{smoothdensektheta}. Other important works on topics relating regularity classes $X$ to spaces of pseudocontinuable functions has, for instance, appeared in \cite{starinvsmooth} by K. Dyakonov and D. Khavinson, in \cite{dyakonov1993division} by K. Dyakonov, and in \cite{aleksandrovinv} by A. B. Aleksandrov.  See also references therein.



\subsection{Main results.}

Let $X$ be a linear manifold  invariant under the backward shift and contained in $H^\infty$. The later condition together with Smirnov's maximum principle ensures that the intersection $\vartheta^*(X):= X \cap K^p_\vartheta$, is independent of $p>0$. Our first result treats a concept which originates back to the work of A. B.  Aleksandrov in \cite{aleksandrov1995existence}. There the author suggests that from the result that $\vartheta^*(\A)$ is dense in $K^2_\vartheta$, one can derive that $\vartheta^*(\A)$ is dense in $K^p_\vartheta$ for all $p\neq 2$, using results on invariant subspaces of $H^p$. Unfortunately, we have not been able to locate a proof or further elaborations on this matter in the literature. The main purpose of our first result is to demonstrate an abstract principle, which captures the essence of this particular idea. The principle in question essentially says that if a certain regularity class is dense in $K_\vartheta^{p_0}$, for some $p_0 >0$, then it is automatically dense in $K_\vartheta^p$, for all $p>0$. As indicated, this seems to be the intrinsic idea of A.B. Aleksandrov in \cite{aleksandrov1995existence}, hence with this perspective in mind, our efforts should be regarded as bringing these ideas into light and generalizing them.

\begin{thm} \thlabel{APthm} Let $\vartheta$ be an inner function and let $X$ be a linear manifold invariant under the backward shift and contained in $H^\infty$. If $\vartheta^*(X)$ is dense in $K^{p_0}_\vartheta$, for some $0<p_0 \leq \infty$, then it is dense in $K^p_\vartheta$, for any $0<p\leq \infty$.
\end{thm}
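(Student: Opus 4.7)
The plan is, for each $0 < p \leq \infty$, to identify the closure $M_p$ of $\vartheta^*(X)$ in $K^p_\vartheta$ (weak-star closure when $p = \infty$) as a subspace of the form $K^p_{I_p}$ for some inner divisor $I_p$ of $\vartheta$, and then to prove that $I_p$ does not depend on $p$. Since the hypothesis gives $M_{p_0} = K^{p_0}_\vartheta$ and hence $I_{p_0} = \vartheta$, this will force $I_p = \vartheta$, and therefore $M_p = K^p_\vartheta$ for every $p$, as claimed.

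First, both $X$ and $K^p_\vartheta$ are invariant under the backward shift, so their intersection $\vartheta^*(X)$ is backward-shift invariant, and consequently so is $M_p$. I would then invoke the classification of closed backward-shift invariant subspaces of $K^p_\vartheta$: each such subspace equals $K^p_I$ for some inner divisor $I$ of $\vartheta$. For $1 \leq p \leq \infty$ this is the classical description coming from the annihilator of $K^p_\vartheta$ inside $H^p$; for $0 < p < 1$ it follows from Aleksandrov's results on $S^*$-invariant subspaces of $H^p$ in \cite{aleksandrov1979invariant}, where the analytic-continuation condition (ii) built into the definition of $K^p_\vartheta$ excludes the additional rational-function components that appear in the general $H^p$-classification. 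This yields $M_p = K^p_{I_p}$ with $I_p \mid \vartheta$.

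The core of the argument is to show that $I_p$ does not depend on $p$, and this is where the hypothesis $X \subset H^\infty$ enters decisively. Any $f \in \vartheta^*(X)$ is bounded and lies in $K^p_{I_p}$, so $f$ is a bounded function admitting an $I_p$-pseudocontinuation; since $f$ is bounded, we in fact have $f \in H^\infty \cap K^p_{I_p} = K^\infty_{I_p} \subseteq K^q_{I_p}$ for every $0 < q \leq \infty$ (the extension across $\T \setminus \sigma(I_p)$ required by condition (ii) is automatic for bounded pseudocontinuable functions). Hence $\vartheta^*(X) \subseteq K^q_{I_p}$ for every $q$, and since $K^q_{I_p}$ is closed in $K^q_\vartheta$, passing to closures gives $M_q \subseteq K^q_{I_p}$. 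Combined with $M_q = K^q_{I_q}$ and the standard fact that $K^q_J \subseteq K^q_I$ precisely when $J \mid I$, we obtain $I_q \mid I_p$; by symmetry $I_p = I_q$, so all the $I_p$ coincide with a single inner divisor $I$ of $\vartheta$.

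The density hypothesis at $p_0$ then forces $I = \vartheta$, completing the proof. The main obstacle is justifying the classification step for $0 < p < 1$: $S^*$-invariant subspaces of $H^p$ in this range are considerably richer than those of $H^p$ for $p \geq 1$, and one must verify that condition (ii) in the definition of $K^p_\vartheta$ precisely rules out the rational pieces in Aleksandrov's general description and leaves only subspaces of the form $K^p_I$. Once this is in hand, the remainder of the argument is essentially a soft manipulation of the divisibility partial order on inner functions.
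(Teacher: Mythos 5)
Your treatment of $1 \leq p \leq \infty$ is essentially the paper's, and your idea of using $X \subset H^\infty$ to push $\vartheta^*(X)$ into $K^q_{I_p}$ for every exponent $q$ is a clean way to compare the closures at different $p$. The problem is the classification step for $0<p<1$, which you correctly flag as the main obstacle but then assert can be verified: it cannot, because the claim is false. Condition (ii) in the definition of $K^p_\vartheta$ only rules out rational components with poles \emph{off} $\sigma(\vartheta)$; it does not rule out poles at isolated points of $\sigma(\vartheta)$. Concretely, take $\vartheta = S_{\delta_1}$, the atomic singular inner function with $\sigma(\vartheta)=\{1\}$. As noted in the introduction of the paper, $r(z)=1/(1-z)$ lies in $H^p\cap\vartheta\conj{H^p_0}$ for every inner $\vartheta$ when $0<p<1$, and $r$ is analytic across $\T\setminus\{1\}=\T\setminus\sigma(\vartheta)$, so $r\in K^p_\vartheta$. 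Since $S^*r=r$, the one-dimensional space $\C r$ is a closed backward-shift invariant subspace of $K^p_\vartheta$, and it is not $K^p_I$ for any inner divisor $I$ of $\vartheta$ (those are either $\{0\}$ or infinite-dimensional). So ``closed $S^*$-invariant subspace of $K^p_\vartheta$'' does not reduce to ``$K^p_I$ with $I\mid\vartheta$'' when $0<p<1$, and your derivation of $I_q\mid I_p$ from $M_q=K^q_{I_q}\subseteq K^q_{I_p}$ presupposes exactly this reduction.

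The repair is to keep the full Aleksandrov triple: write $\M_p=\E^p(\phi,F,\kappa)$ as in \thref{Aleks}, from which $\M_p\subseteq K^p_\vartheta$ only yields $\phi\mid\vartheta$ and $\sigma(\phi)\subseteq F\subseteq\sigma(\vartheta)$; one then needs a reverse inclusion to kill $F$ and $\kappa$. The paper obtains it by first settling $1\le p\le\infty$ (in particular $p=2$) and then observing that $\E^p(\phi,F,\kappa)\cap H^2$ is closed in $H^2$ and contains $\vartheta^*(X)$, hence contains $K^2_\vartheta$; this forces $\vartheta\mid\phi$, so $\phi$ equals $\vartheta$ up to a unimodular constant, $F=\sigma(\vartheta)$, and the pole condition becomes vacuous, giving $\M_p=K^p_\vartheta$. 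Your boundedness argument could be adapted to the same end (from $\vartheta^*(X)\subseteq K^\infty_{\phi_p}\subseteq K^q_{\phi_p}$ for all $q$ one gets $\E^q(\phi_q,F_q,\kappa_q)\subseteq K^q_{\phi_p}$, which controls both $\phi_q$ and $F_q$), but as written the proof has a genuine hole at the $0<p<1$ classification.
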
 
\noindent
The proof of \thref{APthm} relies on characterizations of backward shift invariant subspaces of $H^p$, which are crucial tools that we summarize in the preliminary section. 
We shall now present an important corollary of \thref{APthm} on density of a wide range of regularity classes $X$.
Recall the following measure theoretical result (for instance, see Proposition 2.2 in \cite{smoothdensektheta}), which allows us to decompose any positive finite singular Borel measure $\mu$ on $\T$ into a unique sum of mutually singular measures 
\[
\mu = \mu_\Ca + \mu_\K,
\]
where $\mu_\Ca$ is concentrated on a countable union of Beurling-Carleson sets, that is, a countable union of closed sets $E \subset \T$ of Lebesgue measure zero, satisfying the condition
\[
\sum_{\nu} m(I_\nu) \log m(I_\nu) > -\infty,
\]
where $\{I_\nu\}$ are the connected components of $\T\setminus E$, and $\mu_\K$ vanishes on every Beurling-Carleson set. Consequently, every inner function $\vartheta$ factorizes as 
\[ \label{CAK}
\vartheta = B S_{\Ca} S_{\K},
\]
where $B$ is the Blaschke factor of $\vartheta$ and $S_\Ca$, $S_\K$ are singular inner functions associated to the singular measures $\mu_\Ca$, $\mu_\K$, respectively. Let $\A^\infty$ denote the space of bounded analytic functions on $\D$ with smooth extensions to the boundary $\T$ and let $\Lambda^{\alpha}_a$ denote the analytic H\"older classes, consisting of analytic functions on $\D$ with H\"older continuous extensions to $\T$ of order $0 < \alpha < 1$. According to the main result in \cite{smoothdensektheta}, we obtain the following immediate corollary of \thref{APthm}.
 
\begin{cor} \thlabel{smoothdense}
Let $0<p\leq \infty$ and $X$ be any backward shift invariant linear space with $\A^\infty \subseteq X \subseteq \Lambda^{\alpha}_a$, for some $0<\alpha < 1$. Then $\vartheta^*(X)$ is dense in $K^p_\vartheta$ if and only if $\vartheta = B S_{\Ca}$.
\end{cor}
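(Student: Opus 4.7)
The plan is a formal reduction: \thref{APthm} makes density of $\vartheta^*(X)$ in $K^p_\vartheta$ independent of the exponent $p$, so the entire statement collapses to its $p=2$ instance, which is precisely the main result of \cite{smoothdensektheta}. No new function-theoretic work should be required; the only content is bookkeeping the inclusions $\A^\infty \subseteq X \subseteq \Lambda^{\alpha}_a$ at the appropriate endpoints.

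For the sufficiency direction, I would assume $\vartheta = B S_{\Ca}$. The main result of \cite{smoothdensektheta} produces density of $\vartheta^*(\A^\infty)$ in $K^2_\vartheta$ under exactly this factorization condition. Since $\A^\infty$ is backward shift invariant and contained in $H^\infty$, \thref{APthm} transports this density to every $K^p_\vartheta$ with $0 < p \leq \infty$. The inclusion $\A^\infty \subseteq X$ then yields $\vartheta^*(\A^\infty) \subseteq \vartheta^*(X)$, so density of $\vartheta^*(X)$ in $K^p_\vartheta$ is immediate.

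For the necessity direction, I would suppose $\vartheta^*(X)$ is dense in $K^p_\vartheta$ for some $0 < p \leq \infty$. Since $X$ is backward shift invariant and contained in $H^\infty$ by hypothesis, \thref{APthm} applied in the opposite direction pulls this density back to $K^2_\vartheta$. The inclusion $X \subseteq \Lambda^{\alpha}_a$ then gives $\vartheta^*(X) \subseteq \vartheta^*(\Lambda^{\alpha}_a)$, so $\vartheta^*(\Lambda^{\alpha}_a)$ is dense in $K^2_\vartheta$, and the converse direction of the main result of \cite{smoothdensektheta} delivers $\vartheta = B S_{\Ca}$.

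The only real obstacle is to verify that the cited characterization from \cite{smoothdensektheta} is two-sided at $p=2$ and covers both endpoints of the squeeze: the implication $\vartheta = B S_{\Ca} \Rightarrow \vartheta^*(\A^\infty)$ dense in $K^2_\vartheta$ on the small end, and the implication $\vartheta^*(\Lambda^{\alpha}_a)$ dense in $K^2_\vartheta \Rightarrow \vartheta = B S_{\Ca}$ on the large end. Once both of these are in hand from the cited paper, the proof above is genuinely formal, and this justifies the phrasing of the corollary as an immediate consequence of \thref{APthm}.
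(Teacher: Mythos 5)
Your reduction is exactly the paper's intended argument: the authors state the corollary as an immediate consequence of \thref{APthm} combined with the two-sided $p=2$ characterization from \cite{smoothdensektheta}, using the squeeze $\A^\infty \subseteq X \subseteq \Lambda^{\alpha}_a$ at the two endpoints precisely as you do. The proposal is correct and matches the paper's route.
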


We remark that the proof of Theorem 1.1 in \cite{smoothdensektheta} is partially based on a constructive method using Toeplitz operators and smoothing out singularities of singular inner functions concentrated on Beurling-Carleson sets. In fact, similar methods actually provide a proof of \thref{smoothdense} in the reflexive range $1< p <\infty$. However, Toeplitz operators are no longer useful outside this range, due to the simple fact they are no longer bounded on $H^p$, hence \thref{APthm} actually extends the main result in \cite{smoothdensektheta}.

In our next result, we shall investigate the precise mechanism that determines when a certain linear manifold $X \cap K^p_\vartheta$ is dense in $K^p_\vartheta$. With \thref{APthm} at hand, it suffices to determine this mechanism in the Hilbertian setting $K^2_\vartheta$. This time, we shall let $X$ be a Banach space of analytic functions on $\D$, which is contained in $H^2$ and satisfies some natural properties specified in the preliminary section, so that $X$ has a well-defined \textit{Cauchy dual} $X'$. That is, a Banach space $X'$ of analytic functions $g$ on $\D$, so that $X'$ becomes a dual space of $X$ with respect to the Cauchy-pairing 
\begin{equation}\label{C-dual}
\lim_{r \to 1-} \int_{\T} f(r\zeta) \conj{g(r\zeta)} dm(\zeta) \qquad f \in X, g \in X'.
\end{equation}
Our investigations will encompass a wide range of Banach spaces $X$, a couple of which we list below.
\begin{itemize}

\item The analytic continuously differentiable classes $\A^k$ with $k=0,1,2, \dots$, given by
\[
\left\{ f \in H^\infty: \norm{f}_{\A^k} := \sum_{j=0}^k \norm{f^{(j)}}_\infty < \infty \right\}.
\]
\item The analytic Sobolev spaces $\hil_a^s$ with $s > 0$, defined by
\[
\left\{ f(z) = \sum_{n=0}^\infty f_n z^n : \norm{f}^2_{\hil^s} := \sum_{n=0}^\infty (n+1)^s \abs{f_n}^2 < \infty \right\}.
\]
\item The space $U_a$ of analytic functions with uniformly convergent Taylor series on $\cD$, defined by

\[
\left\{ f(z) = \sum_{n=0}^\infty f_n z^n : \norm{f}_{U} := \sup_{N\geq 0} \,  \norm{\sum_{n=0}^{N} f_n z^n }_{\infty} < \infty \right\}.
\]

\end{itemize}
We remark that $\hil_a^1$ is the classical Dirichlet space, thus functions in $\hil_a^s$ enjoy weak regularity properties on $\T$ for $0 < s \leq 1$. However, if $s>1$ then $\hil_a^s$ contains the so-called Wiener algebra $W_a$, consisting of analytic functions on $\D$ with absolutely summable Taylor coefficients, and consequently it also contains $U_a$ and the analytic H\"older classes $\Lambda^\alpha_a$, with $\alpha > 1/2$. For even larger values of $s>1$, $\hil_a^s$ consists entirely of analytic functions with a certain number of continuous derivatives on $\T$. Denote by $[ \vartheta ]_{X'}$ the weak-star closure of analytic polynomial multiples of $\vartheta$ in $X'$. For a Banach space $X$ as above and an inner function $\vartheta$, we say that the pair $(X, \vartheta)$ satisfies the $(P)$-property if 
\[ 
[ \vartheta ]_{X'} \cap H^2 \subseteq \vartheta H^2.
\] 
In other words, the pair $(X, \vartheta)$ satisfies the $(P)$-property, if the inner factor $\vartheta$ is preserved under weak-star convergence in $X'$. It turns out that this property plays a decisive role in determining density of various regularity classes. 

\begin{thm} \thlabel{Fpropthm}
The pair $(X, \vartheta)$ satisfies the $(P)$-property if and only if $X\cap K^2_\vartheta$ is dense in $K^2_\vartheta$.  
\end{thm}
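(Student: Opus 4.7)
The plan is to reformulate both sides of the claimed equivalence as statements about the duality between $X$ and $X'$ in the Cauchy pairing, and then observe that they coincide. The central object is the annihilator of $X \cap K^2_\vartheta$ inside $X'$.

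The first step is to set up the natural embedding $\iota\colon H^2 \hookrightarrow X'$. Since $X\subseteq H^2$ continuously, each $h \in H^2$ induces a bounded functional on $X$ via the $L^2$-inner product on $\T$, which coincides with the Cauchy pairing restricted to $H^2 \times H^2$. Two easy consequences are needed. First, for any polynomial $p$ and any $f\in X$, the pairing $\langle f, \vartheta p\rangle_{X,X'}$ equals $\langle f, \vartheta p\rangle_{H^2}$. Second, if polynomials $p_n \to g$ in $H^2$ then $\iota(\vartheta p_n) \to \iota(\vartheta g)$ weak-star in $X'$, because the pairing against any $f\in X\subseteq H^2$ reduces to an $H^2$-inner product and is hence continuous in $g$. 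This second observation yields the inclusion $\vartheta H^2 \subseteq [\vartheta]_{X'}$.

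The key duality identity is that the pre-annihilator of $[\vartheta]_{X'}$ in $X$ equals $X\cap K^2_\vartheta$. Indeed, $f\in X$ annihilates every $\vartheta p$ in the Cauchy pairing if and only if $\langle f, \vartheta p\rangle_{H^2}=0$ for all polynomials $p$; by density of polynomials in $H^2$ this is precisely $f\perp \vartheta H^2$ in $H^2$, i.e.\ $f\in K^2_\vartheta$. Since $[\vartheta]_{X'}$ is weak-star closed in $X'$ by construction, the Hahn-Banach biduality in the dual pair $(X,X')$ upgrades this to $[\vartheta]_{X'} = (X\cap K^2_\vartheta)^\perp$, the latter annihilator taken in $X'$.

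With this identity in hand the equivalence falls out. Density of $X\cap K^2_\vartheta$ in the Hilbert space $K^2_\vartheta$ is equivalent, via $L^2$-duality, to the vanishing of every $h\in K^2_\vartheta$ that annihilates $X\cap K^2_\vartheta$ in $L^2$; through $\iota$ this becomes $[\vartheta]_{X'} \cap K^2_\vartheta = \{0\}$. If $(P)$ holds then $[\vartheta]_{X'}\cap H^2 \subseteq \vartheta H^2$, so intersecting with $K^2_\vartheta$ gives $\{0\}$ since $K^2_\vartheta \perp \vartheta H^2$, and density follows. Conversely, assume density and pick $h \in [\vartheta]_{X'}\cap H^2$. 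Using the $H^2$-orthogonal decomposition $h = h_\vartheta + \vartheta g$ with $h_\vartheta \in K^2_\vartheta$ and $g\in H^2$, the inclusion $\vartheta H^2 \subseteq [\vartheta]_{X'}$ forces $h_\vartheta \in [\vartheta]_{X'}\cap K^2_\vartheta = \{0\}$, so $h = \vartheta g \in \vartheta H^2$, giving $(P)$. I expect the main obstacle to be the duality bookkeeping: verifying that the Cauchy pairing truly extends the $L^2$-pairing wherever both are defined, that $H^2$ sits continuously inside $X'$ via $\iota$, and that polynomial approximation in $H^2$ lifts, after multiplication by $\vartheta$, to weak-star approximation in $X'$. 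These are the load-bearing identifications; the \emph{natural properties} of $X$ stipulated in the preliminary section should render them essentially automatic, and once they are in place the theorem is a clean Hahn-Banach computation.
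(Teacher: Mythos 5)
Your proposal is correct and follows essentially the same route as the paper: both rest on the identity $\left(X\cap K^2_\vartheta\right)^\perp=[\vartheta]_{X'}$, obtained by computing the pre-annihilator of $\vartheta\Po$ under the Cauchy pairing and invoking weak-star biduality. The only cosmetic difference is in the converse direction, where you use the orthogonal decomposition $h=h_\vartheta+\vartheta g$ together with $\vartheta H^2\subseteq[\vartheta]_{X'}$, while the paper shows directly by $H^2$-approximation that any $f\in\left(X\cap K^2_\vartheta\right)^\perp\cap H^2$ annihilates all of $K^2_\vartheta$; both are valid instances of the same duality computation.
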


A few remarks are now in order. If $X$ is reflexive, then the weak and weak-star topologies on $X'$ agree and since $[\vartheta ]_{X'}$ is a convex set, we can rephrase the $(P)$-property of $(X, \vartheta)$, by replacing $[\vartheta]_{X'}$ with the norm-closure of analytic polynomial multiples of $\vartheta$ in $X'$. Furthermore, if the Cauchy dual $X'$ is equivalent to a Bergman space, which for instance happens for the class of analytic Sobolev spaces $X=\mathcal{H}_a^{s}$ with $s>0$, then $(X, \vartheta)$ satisfies the $(P)$-property if and only if $\vartheta = B S_\Ca$, where $B$ is a Blaschke product and $S_\Ca$ is a  singular inner function supported on a countable union of Beurling-Carleson sets (see \cite{roberts1985cyclic}). Connections between a wide range of regularity classes $X$ in $K_\vartheta$, having Bergman spaces as Cauchy duals, and between the theorem of Korenblum and Roberts (see \cite{korenblum1981cyclic},\cite{roberts1985cyclic}) on forward shift invariant subspaces generated by inner functions on Bergman spaces, has previously been employed in \cite{starinvsmooth} and in \cite{smoothdensektheta}. 

In light of \thref{Fpropthm}, it is natural to ask if there exists a practical sufficient condition on $X$, which ensures the $(P)$-property of $(X, \vartheta)$, for all $\vartheta$. This is the content of our next main result. 

\begin{thm} \thlabel{FpropHp}
Assume that $X'$ is continuously embedded in $H^p$, for some $p > 0$. Then $(X, \vartheta)$ satisfies the $(P)$-property, for all inner functions $\vartheta$.
\end{thm}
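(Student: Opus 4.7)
The plan is to unwind the $(P)$-property directly. Take $f \in [\vartheta]_{X'} \cap H^2$ and an approximating sequence $\vartheta q_n \to f$ weak-star in $X'$ (valid when $X$ is separable, which covers the examples of interest; otherwise one replaces the sequence by a net). The Banach-Steinhaus principle bounds $(\vartheta q_n)$ in $X'$, and the continuous embedding $X' \hookrightarrow H^p$ transports this to a bound $\sup_n \|\vartheta q_n\|_{H^p} < \infty$. Since $|\vartheta| = 1$ $m$-a.e. on $\T$, the polynomials $q_n$ themselves satisfy $\|q_n\|_{H^p} = \|\vartheta q_n\|_{H^p}$ and thus form a bounded family in $H^p$.

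Next I would apply Montel's theorem: bounded families in $H^p$ are normal on $\D$, so along a subsequence $q_n \to h$ uniformly on compact subsets of $\D$ for some analytic function $h$, and the standard Fatou estimate on circles $|z| = r$ places $h \in H^p$. Multiplying by the bounded analytic function $\vartheta$ also gives $\vartheta q_n \to \vartheta h$ uniformly on compact subsets.

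To identify $\vartheta h$ with $f$ I would match Taylor coefficients. Since monomials belong to $X$, the $k$-th Taylor-coefficient functional is weak-star continuous on $X'$ through the Cauchy pairing, so $(\vartheta q_n)_k \to f_k$ for every $k \geq 0$. Uniform convergence on compact subsets forces the Taylor coefficients of $\vartheta q_n$ to also converge to those of $\vartheta h$, whence $f = \vartheta h$ on $\D$.

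Finally I would promote $h \in H^p$ to $h \in H^2$, which is the step that uses $f \in H^2$. On the boundary $|h| = |f/\vartheta| = |f|$ lies in $L^2(\T)$, and the outer function $O$ with $|O| = |h|$ on $\T$ is then in $H^2$. Since the inner factor of $h$ has modulus at most one, $|h| \leq |O|$ pointwise on $\D$, so $h \in H^2$. Consequently $f = \vartheta h \in \vartheta H^2$, which establishes the $(P)$-property. The most delicate point in this plan is the reconciliation between the two modes of convergence — weak-star convergence in $X'$ provides only Taylor-coefficient information, whereas Montel's theorem yields uniform-on-compacts convergence of a subsequence — and the key bookkeeping is to verify that both limits agree with $f$. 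Once that matching is in place, the remaining steps are routine $H^p$ factorization manipulations.
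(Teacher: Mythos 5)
There is a genuine gap at the very first step, and it is precisely the point where the paper has to work. You take $f\in[\vartheta]_{X'}$ and assume it is the weak-star limit of a \emph{sequence} $\vartheta q_n$ with $q_n\in\Po$. But $[\vartheta]_{X'}$ is the weak-star closure of the unbounded convex set $\vartheta\Po$, and separability of $X$ only makes \emph{bounded} subsets of $X'$ weak-star metrizable; the weak-star closure of an unbounded convex set may strictly contain its set of weak-star sequential limits (in general one would have to iterate the sequential closure transfinitely). Your escape hatch of "replacing the sequence by a net" does not save the argument: a weak-star convergent net need not be bounded, so Banach--Steinhaus gives nothing, and the Montel/normal-families step has no analogue for nets. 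The correct way around this is the one the paper takes: exhibit a \emph{convex} superset of $\vartheta\Po$ (there, $X'\cap\vartheta\N^+$), prove that it is weak-star \emph{sequentially} closed, and invoke the Krein--Smulian theorem (which, for separable $X$, upgrades sequential weak-star closedness of a convex set to weak-star closedness); then $[\vartheta]_{X'}$ is automatically contained in that superset, and intersecting with $H^2$ plus Smirnov's maximum principle finishes the proof. Note that the implication you need runs in the opposite direction from the one Krein--Smulian provides.

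The good news is that your analytic core survives this repair essentially verbatim, and is arguably more elementary than the paper's. If you replace the polynomials $q_n$ by arbitrary $h_n\in\N^+$ with $\vartheta h_n\in X'$, then Smirnov's maximum principle still gives $h_n\in H^p$ with $\norm{h_n}_{H^p}=\norm{\vartheta h_n}_{H^p}$, so your division-by-$\vartheta$ trick, Montel's theorem, the Taylor-coefficient identification of the limit (the coefficient functionals are weak-star continuous because the monomials lie in $X$), and the final inner--outer factorization step all go through and show that $X'\cap\vartheta\N^+$ is weak-star sequentially closed. This replaces the paper's use of the harmonic-majorant construction of \thref{Hlemma} and the weak compactness of balls in $H^2$ with a direct normal-families argument on the quotients $h_n$. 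So the missing ingredient is not analytic but functional-analytic: the Krein--Smulian reduction.
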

We shall now turn to an application of \thref{FpropHp}, with the intention of generalizing Aleksandrov's density theorem to a slightly finer class than the disk algebra $\A$. 
To this end, recall be the space $U_a$ of analytic functions in $\D$ with uniformly convergent Taylor series on $\cD$, that is, if $f(z) = \sum_{n=0}^\infty f_n z^n$ is the Taylor series of an analytic function $f$ with continuous restriction to $\T$, and $P_N f(z) = \sum_{0\leq n \leq N} f_n z^n$, then $U_a$ consists of those $f$ for which \[ \lim_{N \to \infty} \| P_N f - f\|_{\infty} = 0,\] the norm $\|\cdot \|_\infty$ being computed on the circle $\T$ (or, equivalently, over $\D$). We equip $U_a$ with the norm \[ \|f\|_U := \sup_{N \geq 0} \| P_Nf\|_\infty, \] which makes $U_a$ into a Banach space, strictly contained in $\A$, and for which polynomials are dense. 
A deep theorem of Vinogradov proved in \cite{vinogradov1976convergence} asserts that the Cauchy dual $U'_a$ is continuously embedded in $H^p$ for all $p \in (0,1)$. As a direct consequence of our abstract approach and Vinogradov's result, we obtain the following strengthening of Aleksandrov's famous density result from \cite{aleksandrovinv}.

\begin{cor} \thlabel{DenseinUa}
The linear manifold $\vartheta^*(U_a )$ is dense in $K_\vartheta^p$, for any inner function $\vartheta$ and 
 $0<p \leq \infty$.
\end{cor}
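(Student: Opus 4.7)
The plan is to assemble the three main theorems of the paper into a short chain of implications, with Vinogradov's embedding theorem from \cite{vinogradov1976convergence} serving as the single external input. The corollary should then fall out essentially as bookkeeping, so no substantive obstacle appears along the way.

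First I would verify that $U_a$ satisfies the standing hypotheses of both \thref{Fpropthm} and \thref{APthm}. The space $U_a$ is a Banach space of analytic functions continuously embedded in $H^\infty \subseteq H^2$, and polynomials are dense in $U_a$, so $U_a$ admits a Cauchy dual $U_a'$ in the sense of \eqref{C-dual}. Backward-shift invariance is also immediate: if $f(z) = \sum_{n \geq 0} f_n z^n \in U_a$ and $L$ denotes the backward shift, then $z P_N(Lf)(z) = P_{N+1}f(z) - f_0$ on $\T$, whence $\|P_N(Lf)\|_\infty = \|P_{N+1}f - f_0\|_\infty \leq \|f\|_U + |f_0|$, so that $\|Lf\|_U \leq \|f\|_U + |f(0)|$ and $Lf \in U_a$.

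Next I would invoke Vinogradov's theorem: $U_a' \hookrightarrow H^p$ continuously for every $p \in (0,1)$. By \thref{FpropHp}, the pair $(U_a, \vartheta)$ then satisfies the $(P)$-property for every inner function $\vartheta$, and \thref{Fpropthm} in turn yields density of $U_a \cap K^2_\vartheta$ in $K^2_\vartheta$.

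Finally I would apply \thref{APthm} with $X = U_a$ and $p_0 = 2$ to extend this density to $K^p_\vartheta$ for every $0 < p \leq \infty$. Since $U_a \subseteq H^\infty$, the intersection $\vartheta^*(U_a) = U_a \cap K^p_\vartheta$ is independent of $p$ by Smirnov's maximum principle, as recorded at the beginning of Section 1.1, so no further identification of the relevant sets is needed. The hardest ingredient in this argument is not the chain itself but the citation it relies on; the paper's abstract framework is tailored so that, once Vinogradov's deep embedding result is available, the corollary is almost formal.
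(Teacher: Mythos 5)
Your proposal is correct and follows exactly the route the paper intends: Vinogradov's embedding $U_a' \hookrightarrow H^p$ for $p \in (0,1)$, then \thref{FpropHp} to get the $(P)$-property, \thref{Fpropthm} to get density in $K^2_\vartheta$, and \thref{APthm} to pass to all $0 < p \leq \infty$. Your explicit verification of the standing hypotheses on $U_a$ (polynomial density, embedding in $H^2$, backward-shift invariance via the estimate $\|P_N(Lf)\|_\infty \leq \|f\|_U + |f(0)|$) is a welcome addition that the paper leaves implicit.
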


We want to mention that the corollary can be extended by replacing $K^2_\vartheta$ with a space from the class of Hilbert spaces of analytic functions studied in \cite{jfabackshift} on which the backward shift operator acts as a contraction. In particular, it applies with $K^2_\vartheta$ replaced by a de Branges-Rovnyak space. A proof of this, which is admittedly rather technical, can be deduced from the arguments in \cite{jfabackshift}, in particular by replacing our $(P)$-property and \thref{FpropHp} with Lemma 3.4 in \cite{jfabackshift}. 

We remark that one cannot strengthen \thref{DenseinUa} by replacing $U_a$ with the 
Wiener algebra $W_a$, which consists of analytic functions on $\D$ with absolutely summable Taylor coefficients and equppied with the $\ell^1$-norm. In fact, there exists an inner function $\vartheta$, such that $W_a \cap K^2_\vartheta = \{0\}$, see \cite{smoothdensektheta}. 
The problem of characterizing the inner functions $\vartheta$ for which $(W_a, \vartheta)$ satisfies the $(P)$-property seems quite difficult and the main obstacle is 
that the $\ell^1$-norm poorly reflect the precise boundary behavior of functions in $W_a$. For instance, see Kaufman in \cite{kaufman1979zero}, illustrating pathological behaviors of zero sets in $W_a$.

The manuscript is organized as follows. In the section of preliminary results, we collect the preparatory work which consists of justifying that the Cauchy duals of a wide range of Banach spaces $X$ are well defined, results on invariant subspaces of the backward shift on $H^p$, for all $p$. The last section is devoted to proofs of our main results, which include \thref{APthm}, \thref{Fpropthm} and \thref{FpropHp}. \thref{APthm} requires most of the work and relies on the results about invariant subspaces of the backward shift, presented in the preliminary section. Meanwhile, the proofs of \thref{Fpropthm} and \thref{FpropHp} are essentially self-contained.

\section{Preliminary results.}

\subsection{Cauchy duals of regularity classes $X$.}

In this section, we shall introduce a couple of natural assumptions on our regularity classes $X$ of considerations and briefly justify that their Cauchy duals are well-defined. Unless indicated otherwise, we shall denote by $X$ a Banach space of analytic functions on $\D$ with the following properties:

\begin{enumerate}
\item[(i)] The set of analytic polynomials $\Po$ are dense in $X$.

\item[(ii)] The norms of the monomials satisfy $\limsup_{n \to \infty} \norm{z^n}_{X}^{1/n} \leq 1$.

\item[(iii)] $X$ is continuously embedded in $H^2$. 
\end{enumerate}
\noindent
Let $X^*$ denote the Banach space dual of $X$ and consider the (conjugate) linear map $\mathcal{L}$ on $X^*$ into the space of analytic functions on $\D$ defined by 
\[
\mathcal{L}(\Lambda)(z) := \sum_{n=0}^{\infty} \conj{\Lambda(\zeta^n)} z^n \qquad  \Lambda \in X^* , \qquad z \in \D.
\] 
Property $(ii)$ ensures that $\mathcal{L}$ maps into the space of analytic functions on $\D$ and $(i)$ implies that $\mathcal{L}$ is injective. Now denote by $X'$ the image of $\mathcal{L}$ under $X^*$, equipped with the norm 
\[
\norm{\mathcal{L}(\Lambda)}_{X'} := \norm{\Lambda}_{X^*} \qquad \Lambda \in X^*, 
\]
so that $X'$ becomes a Banach space of analytic functions on $\D$ which is isometrically isomorphic to $X^*$. 
The space $X'$ is referred to as the \textit{Cauchy dual} of $X$, since for any $\Lambda \in X^*$ one has
\[
\Lambda (f) = \lim_{r \to 1-} \int_{\T} f(\zeta)\,  \overline{\mathcal{L}(\Lambda)(r\zeta)} dm(\zeta), \qquad f \in \Po.
\]
We remark that in all Banach spaces $X$ of our considerations contain the dilations $f_r(z):=f(rz)$ by $0<r<1$ of $f\in X$, and we actually have
$\norm{f-f_r}_X \to 0$ as $r\to 1-$. In particular, this justifies the Cauchy dual representation for all $f\in X$, stated in \eqref{C-dual}. The purpose of property $(iii)$ is to ensure that our regularity classes $X$ have sufficiently behaved boundary values, so that integral pairings with $K^2_\vartheta$-functions are well-defined and so that the Cauchy dual $X'$ contains $H^2$. Property $(i)$ is basically a convenient way to ensure that $X$ is a separable Banach space. We summarize this section by briefly identifying the Cauchy duals to some previously mentioned examples. The Cauchy dual of the disk algebra $\A$ is identified with Cauchy integrals of $M(\T)$, the space of finite complex Borel measures on $\T$. It is equipped with the norm
\[
\norm{f}_{\A'} := \inf \{ \norm{\mu} : f= P_+ (\mu), \, \mu \in M(\T) \},
\]
where $P_+ (\mu) (z) := \int_{\T} \frac{d\mu(\zeta)}{1-\conj{\zeta}z}$ denotes the Cauchy integral of $\mu$
and $\norm{\mu}$ denotes the total variation norm of $\mu$. The Cauchy duals of the spaces $\A^k$ with $k\geq 1$ essentially consists of Cauchy integrals of distributions up to order $k$ on $\T$, but with regards to functional analysis, spaces of continuously differentiable functions are more conveniently captured within the realm of Sobolev spaces. The analytic Sobolev spaces $\hil_a^s$ with $s > 0$ are Hilbert spaces of analytic functions $\D$, with Cauchy duals conveniently denoted by $\hil_a^{-s}$, due to the fact that they consist of the analytic functions on $\D$ having finite norm
\[
\norm{f}^2_{\hil^{-s}} := \sum_{n=0}^{\infty} (n+1)^{-s} \abs{f_n}^2. 
\]
One can show that the Cauchy dual of $\hil_a^{-s}$ is again $\hil_a^{s}$, thus making these spaces reflexive in the Cauchy dual pairing. Moreover, it is not difficult to prove that $\hil_a^{-s}$ is equivalent to the Bergman space $L^{2,s-1}_a$, given by
\[
\norm{f}^2_{L^{2,s-1}} := \int_{\D} \abs{f(z)}^2 (1-|z|)^{s-1} dA(z)
\]
where $dA$ denotes the area-measure on $\D$. 

\subsection{Backward shift invariant subspaces on $H^p$.}

Here we collect some results on invariant subspaces of the backward shift operator  
\[
f \mapsto \left(f-f(0) \right)/z
\]
on $H^p$. In the reflexive range $1<p<\infty$, Douglas, Shapiro and Shields showed in \cite{douglas1967cyclic} that any closed invariant subspace of the backward shift is a space of pseudocontinuable functions $K^p_\vartheta$, for some inner function $\vartheta$. The same results remains true for $p=1$, but requires a more subtle proof. For the sake of future reference, we state these results below and refer the reader to the survey in \cite{cima2000backward} for detailed treatments.
\begin{prop}\thlabel{Aleks1} 
Let $1\leq p < \infty$. Then $\M_p$ is a closed invariant subspace of the backward shift operator on $H^p$ if and only there exists an inner function $\vartheta$ such that $\M_p = K_\vartheta^p$.
\end{prop}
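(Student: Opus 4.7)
The plan is to establish the equivalence by reducing the substantive direction to Beurling's theorem via a duality argument.

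The easy direction---that $K^p_\vartheta$ is closed and backward-shift invariant---is short. Closedness of $K^p_\vartheta = H^p \cap \vartheta \conj{H^p_0}$ follows at once from it being the intersection of two closed subspaces of $L^p(\T)$, using that multiplication by the unimodular $\vartheta$ is an isometry on $L^p(\T)$. For invariance, if $f = \vartheta \conj{g}$ on $\T$ with $g \in H^p_0$, a direct boundary-value computation shows that $(f - f(0))/\zeta$ is again expressible as $\vartheta$ times the conjugate of an $H^p_0$-function.

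For the substantive direction in the reflexive range $1 < p < \infty$, I would use the Cauchy pairing $\langle f, g \rangle := \int_\T f \conj{g}\, dm$ to identify $H^q$ (with $1/p + 1/q = 1$) as the dual of $H^p$. A short Fourier-coefficient computation shows that, under this pairing, the adjoint of the backward shift $L$ on $H^p$ is the forward shift $S : g \mapsto \zeta g$ on $H^q$. Thus, given a closed $L$-invariant subspace $\M \subseteq H^p$, its annihilator $\N := \M^\perp \subseteq H^q$ is closed and $S$-invariant. Beurling's theorem, extended from $H^2$ to $H^q$, yields $\N = \vartheta H^q$ for some inner $\vartheta$ (with the degenerate case $\N = \{0\}$, corresponding to $\M = H^p$, absorbed into the statement by convention). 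A Fourier-coefficient unwinding of the condition $f \perp \vartheta h$ for all $h \in H^q$ then places $\conj{\vartheta} f \in \conj{H^p_0}$, whence $f \in K^p_\vartheta$; the reverse inclusion is itself the annihilator statement for $\vartheta H^q$.

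The main obstacle is the endpoint case $p = 1$, where the loss of reflexivity forces a weak-star argument. One identifies the dual of $H^1$ with the quotient $L^\infty / \conj{H^\infty_0}$ and transfers the invariant-subspace problem to weak-star closed $S$-invariant subspaces of $H^\infty$. The analog of Beurling's theorem in this weak-star setting, whose proof rests on the $F.$ and $M.$ Riesz theorem, again delivers the inner function $\vartheta$, after which the annihilator calculation concludes as in the reflexive case. Verifying weak-star continuity of $S$ on the quotient and bookkeeping the relevant pre-annihilators is the delicate step and the reason this proposition is traditionally extracted from \cite{douglas1967cyclic} and the survey \cite{cima2000backward} rather than reproved from scratch.
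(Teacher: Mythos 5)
The paper does not actually prove this proposition: it is stated as background and attributed to Douglas--Shapiro--Shields \cite{douglas1967cyclic} for $1<p<\infty$, with the $p=1$ case deferred to the survey \cite{cima2000backward}. So the comparison is between your sketch and the standard proof in those references --- and your sketch is indeed that standard proof. The easy direction is fine (for $p\geq 1$ condition (ii) in the definition of $K^p_\vartheta$ is redundant, so closedness is just the intersection of two closed subspaces of $L^p$, and the algebra $\conj{\zeta}\bigl(f-f(0)\bigr)=\vartheta\,\conj{\zeta g-\conj{f(0)}\zeta\vartheta}$ verifies invariance). For $1<p<\infty$ the duality argument is correct: under the pairing $\int_\T f\conj{g}\,dm$ the adjoint of the backward shift is the forward shift on $H^q$, the annihilator of $\M$ is a closed $S$-invariant subspace of $H^q$, Beurling's theorem (valid in all $H^q$, $1\le q\le\infty$, with the appropriate topology) gives $\vartheta H^q$, and unwinding the annihilator condition gives $\conj{\vartheta}f\in\conj{H^p_0}$. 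You are also right to flag the degenerate cases: as literally stated the proposition excludes $\M_p=H^p$, which is backward-shift invariant but not of the form $K^p_\vartheta$; the references state the result for proper subspaces.

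The one place where your write-up falls short of a proof is exactly the place the paper itself warns is "more subtle," namely $p=1$. Two points there. First, a small imprecision: the annihilator of $\M\subseteq H^1$ lives in $L^\infty$ (or in the quotient $L^\infty/\conj{H^\infty_0}$), not in $H^\infty$; what one actually needs is Helson's classification of weak-star closed $\zeta$-invariant subspaces of $L^\infty$ (simply invariant ones are $\varphi H^\infty$ with $\varphi$ unimodular, doubly invariant ones are $\chi_E L^\infty$), together with the observation that $\M^\perp\supseteq\conj{H^\infty_0}$ rules out the doubly invariant case unless $\M=\{0\}$ and forces $\varphi=\conj{\zeta\vartheta}$ with $\vartheta$ inner. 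Second, you assert rather than verify the weak-star continuity and pre-annihilator bookkeeping. Since you explicitly defer these steps to \cite{douglas1967cyclic} and \cite{cima2000backward} --- which is precisely what the authors do --- this is an honest citation rather than a hidden gap, but as a self-contained proof the $p=1$ case is incomplete.
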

\noindent
We shall also need Aleksandrov's description of the backward shift invariant subspaces $\E$ of $H^p$ for $0<p <1$ in \cite{aleksandrov1979invariant}. These are way more complicated and cannot simply be captured by $K^p_\vartheta$. In fact, to give a complete description we need the following three parameters: an inner function $\vartheta$, a closed set $ \sigma(\vartheta) \subseteq F \subset \T$ determined by $F := \{ \zeta \in \T: (1-\conj{\z}z)^{-1} \in \E \}$, a integer-valued function $\kappa$ on $F$ determined by $\kappa(\z) = \max \{ n \in \mathbb{N}: (1-\conj{\z}z)^{-n} \in \E \}$. 

\begin{prop}[Aleksandrov, \cite{aleksandrov1979invariant} ]\thlabel{Aleks} 
Every closed invariant subspace of the backward shift operator on $H^p$ with $0<p<1$ is of the form $\E^p(\vartheta, F, \kappa)$, for some parameters $\theta, F, \kappa$ as above, and the space is defined by the following properties: 
\begin{itemize}
\item[(i)] 
$ f \in H^p \cap \vartheta \conj{H_0^p}$.
\item[(ii)]
$f$ has analytic continuation across $\T \setminus F$.
\item[(iii)]
$f$ has a pole of order no larger than $\kappa(\z)$ at $\z \in F_0 \setminus \sigma(\vartheta)$, where $F_0$ denotes the isolated points of $F$.

\end{itemize}
\end{prop}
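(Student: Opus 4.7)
The plan is to prove the two directions of the characterization. First, I would verify that for any triple $(\vartheta, F, \kappa)$ the space $\E^p(\vartheta, F, \kappa)$ defined by conditions (i)--(iii) is closed in $H^p$ and invariant under the backward shift operator $f \mapsto (f - f(0))/z$. Second, for an arbitrary closed backward shift invariant subspace $\E \subseteq H^p$, I would reconstruct a triple $(\vartheta, F, \kappa)$ so that $\E = \E^p(\vartheta, F, \kappa)$, with $F$ and $\kappa$ read off from $\E$ as in the statement.

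The first direction is essentially routine. Condition (i), $f \in H^p \cap \vartheta \conj{H_0^p}$, is preserved by the backward shift by the same mechanism as in \thref{Aleks1}, since dividing by $z$ and subtracting a constant preserves membership in $\vartheta \conj{H_0^p}$. Condition (ii) on analytic continuation across $\T \setminus F$ is transferred to $(f - f(0))/z$ by inspection, and condition (iii) can only decrease pole orders at $\z \in F_0 \setminus \sigma(\vartheta)$. Closure of $\E^p(\vartheta, F, \kappa)$ in the $H^p$-quasi-norm is inherited from the fact that $H^p$-convergence of a pseudocontinuable sequence forces locally uniform convergence of the pseudocontinuations on compact subsets of $(\C_\infty \setminus \cD) \cup (\T \setminus F)$, so each of the three properties survives passage to the limit.

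For the converse, the tasks are to verify that the $F$ and $\kappa$ defined in the statement really do capture the right data, and then to produce $\vartheta$. That $F$ is closed follows from continuity of $\z \mapsto (1 - \conj\z z)^{-1}$ in the $H^p$-quasi-norm combined with closure of $\E$. For $\z_0 \in \T \setminus F$, I would show that every $f \in \E$ extends analytically across $\z_0$ by arguing by contradiction: iterated backward shifts of $f$ together with a localized Cauchy-kernel construction would manufacture $(1 - \conj{\z_0} z)^{-1} \in \E$, violating $\z_0 \notin F$. The same mechanism, pushed to higher powers, pins down the pole orders encoded by $\kappa$.

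The main obstacle is the construction of $\vartheta$. In the reflexive setting one uses duality with forward shift invariant subspaces of the conjugate index Hardy space, but for $0 < p < 1$ the space $H^p$ is not locally convex and no standard duality argument is available. The approach I would pursue is to lift $\E$ into a higher Hardy space: multiplying elements of $\E$ by a carefully chosen singular inner function $S$, with mass concentrated on $F$ so as to absorb the rational singularities, produces a family lying in $H^q$ for some $q \geq 1$, where the machinery of \thref{Aleks1} yields an inner function $\widetilde\vartheta$; the desired $\vartheta$ is then extracted by factoring out $S$. Equivalently, one can work directly with the pseudocontinuations $F_\vartheta$ on the exterior disk $\C_\infty \setminus \cD$, where they are honest $H^p$-functions, and take the greatest common inner divisor there. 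Once $\vartheta$ is produced, the equality $\E = \E^p(\vartheta, F, \kappa)$ is closed off by a density argument: linear combinations of the canonical rational elements $(1 - \conj\z z)^{-n}$ for $\z \in F$ and $1 \leq n \leq \kappa(\z)$, together with the purely $\vartheta$-pseudocontinuable part of $\E$, must be shown to be dense in the right-hand side.
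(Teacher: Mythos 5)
The paper does not actually prove this proposition: it is quoted as Aleksandrov's theorem from \cite{aleksandrov1979invariant}, with \cite{cima2000backward} cited for a detailed treatment, so there is no in-paper argument to compare against. Evaluating your sketch on its own terms, the decisive step --- producing the inner function $\vartheta$ --- rests on a move that cannot work. You propose to ``lift $\E$ into a higher Hardy space'' by multiplying by a singular inner function $S$ with mass on $F$ so as to ``absorb the rational singularities.'' But an inner function has unimodular boundary values $m$-a.e., so $\|Sf\|_{H^p}=\|f\|_{H^p}$ for every $f$: multiplication by $S$ does not change integrability on $\T$ at all, and in particular cannot turn $(1-\conj{\z}z)^{-n}\in H^p$, $p<1$, into an $H^q$-function with $q\geq 1$. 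To damp a boundary pole you would need an outer factor vanishing at $\z$, which destroys the backward shift invariance you need in order to invoke \thref{Aleks1}. Your fallback --- passing to the pseudocontinuations on $\C_\infty\setminus\cD$ and taking a greatest common inner divisor --- presupposes that every element of a proper closed backward shift invariant subspace of $H^p$ admits a pseudocontinuation; that existence statement is the heart of the theorem (already in the Douglas--Shapiro--Shields case $p>1$) and cannot be assumed at that stage.

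The missing idea is that, contrary to your remark that no duality argument is available for $0<p<1$, the space $H^p$ has a rich separating dual with respect to the Cauchy pairing, namely the analytic Lipschitz class $\Lambda_{1/p-1}$ (Duren--Romberg--Shields), and Aleksandrov's proof runs precisely through annihilators in that dual. The extra parameters $F$ and $\kappa$, absent when $p\geq 1$, appear exactly because elements of this dual extend smoothly to $\conj{\D}$, so point evaluations and derivatives at boundary points become continuous functionals; this is why the kernels $(1-\conj{\z}z)^{-n}$ can belong to a proper invariant subspace in the first place. Without this machinery (or an equivalent substitute) your outline has no route to $\vartheta$, and the concluding ``density argument'' identifying $\E$ with $\E^p(\vartheta,F,\kappa)$ is likewise unsupported. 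The forward direction of your sketch (that $\E^p(\vartheta,F,\kappa)$ is closed and backward shift invariant) is essentially right, though even the closedness already requires the nontrivial argument the paper cites as Proposition 6.2.1 of \cite{cima2000backward}.
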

%



The last piece of equipment in this section is a characterization of the backward shift invariant subspaces of $H^\infty$. To this end, we shall need a result on a Beurling-type theorem on the pre-dual of $H^\infty$, identified with respect to the Cauchy dual pairing with the Banach space $\K_a :=P_+(L^1)$, which consists of Cauchy integrals of absolutely continuous finite Borel measures on $\T$, regarded as a subspace of Cauchy integrals of finite complex Borel measures on $\T$.

\begin{prop} \thlabel{beurlingKa}
 Every weak-star closed backward shift invariant subspace of $H^\infty$ is of the form $K_\vartheta^\infty$, for some inner function $\vartheta$.
\end{prop}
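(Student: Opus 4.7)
The plan is to pass to the pre-dual $\K_a$ of $H^\infty$ and reduce the question to a Beurling-type classification theorem there. Let $\N := \M_\perp \subseteq \K_a$ be the pre-annihilator of $\M$; since $\M$ is weak-star closed we have $\M = \N^\perp$. A short Fourier-coefficient calculation applied to the pairing $\langle f, g \rangle = \sum_n f_n \overline{g_n}$ shows that the backward shift $L$ on $H^\infty$ is the weak-star adjoint of multiplication by $z$ on $\K_a$, so the $L$-invariance of $\M$ translates directly into $M_z$-invariance of $\N$. The problem thereby reduces to classifying the norm-closed $M_z$-invariant subspaces of $\K_a$, and the target description is that every such $\N$ coincides with $\overline{\vartheta H^1}^{\K_a}$ for some inner function $\vartheta$.

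Granted that description, the identification $\M = K_\vartheta^\infty$ reduces to a brief annihilator computation inside $H^\infty$. A function $f \in H^\infty$ satisfies $\int_{\T} f\,\overline{\vartheta g}\,dm = 0$ for every $g \in H^1$ if and only if $\bar\vartheta f \in L^\infty$ annihilates $H^1$ under the conjugate $L^\infty$--$L^1$ pairing; by a direct Fourier computation, the $L^\infty$-annihilator of $H^1$ equals $\overline{H^\infty_0}$, so $\bar\vartheta f \in \overline{H^\infty_0}$ and hence $f \in \vartheta\overline{H^\infty_0}$. Combined with $f \in H^\infty$ this gives $f \in H^\infty \cap \vartheta\overline{H^\infty_0} = K_\vartheta^\infty$, and the reverse inclusion $K_\vartheta^\infty \subseteq (\vartheta H^1)^\perp$ is immediate from the definition.

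The principal obstacle is therefore the Beurling-type description of the closed $M_z$-invariant subspaces of $\K_a$. My strategy exploits the continuous dense embedding $H^1 \hookrightarrow \K_a$, which follows from the identification $\K_a \cong L^1/\overline{H^1_0}$ together with the density of trigonometric polynomials in $L^1$. Given a closed $M_z$-invariant $\N \subseteq \K_a$, the intersection $\N \cap H^1$ is norm-closed and $M_z$-invariant in $H^1$, so by the classical Beurling theorem on $H^1$ one has $\N \cap H^1 = \vartheta H^1$ for some inner $\vartheta$. The hard part is verifying that $\N \cap H^1$ is $\K_a$-norm dense in $\N$, which is the analytic heart of the proposition. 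An alternative route that sidesteps this explicit density is to define $\vartheta$ through the $H^1$-norm closure $\overline{\M}^{H^1} = K_\vartheta^1$ via \thref{Aleks1}, obtain $\M \subseteq K_\vartheta^\infty$ from Smirnov's principle, and then establish the reverse inclusion by approximating each $f \in K_\vartheta^\infty$ by a uniformly $H^\infty$-bounded sequence in $\M$ so that weak-star closedness of $\M$ can be invoked; this bounded-approximation problem carries the same analytic weight as the $\K_a$-density argument above.
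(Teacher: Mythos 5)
Your duality reduction is exactly the paper's: pass to the pre-annihilator $\N = \M_\perp$ in $\K_a$, observe that weak-star continuity of the backward shift on $H^\infty$ (as the adjoint of $M_z$ on $\K_a$) makes $\N$ a closed $M_z$-invariant subspace of $\K_a$, invoke a Beurling-type classification there, and finish with the annihilator computation $(\vartheta H^1)^\perp \cap H^\infty = H^\infty \cap \vartheta\overline{H^\infty_0} = K_\vartheta^\infty$. That computation is fine. The problem is that you never actually prove the Beurling-type classification of the closed $M_z$-invariant subspaces of $\K_a$, and you say so yourself: the density of $\N \cap H^1$ in $\N$ is flagged as ``the analytic heart of the proposition'' and then left open. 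This is a genuine gap, not a routine verification. Density of $H^1$ in $\K_a$ does not transfer to density of $\N \cap H^1$ in $\N$ for a closed subspace $\N$, and you would also need to rule out the degenerate possibility $\N \cap H^1 = \{0\}$ with $\N \neq \{0\}$ before Beurling's theorem on $H^1$ even produces an inner function. The paper resolves this by citing Aleksandrov's theorem on the forward shift invariant subspaces of the space of Cauchy transforms (Theorem 11.2.3 in the Cima--Matheson--Ross monograph), which states precisely that every such subspace is $\vartheta(\K_a) = \{f \in \K_a : f/\vartheta \in \K_a\}$, equivalently the closure of the polynomial multiples of $\vartheta$. That theorem is the nontrivial input, and your proof needs either to cite it or to supply its proof.

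Your proposed ``alternative route'' does not escape the difficulty: defining $\vartheta$ via $\overline{\M}^{H^1} = K^1_\vartheta$ gives $\M \subseteq K^\infty_\vartheta$ cheaply, but the reverse inclusion requires approximating an arbitrary $f \in K^\infty_\vartheta$ by a \emph{bounded} sequence from $\M$ (so that weak-star closedness applies), and as you note this carries the same analytic weight. In short: the architecture of your argument matches the paper's, but the load-bearing step --- the invariant subspace theorem for $\K_a$ --- is asserted rather than established.
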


\begin{proof} Note that since the forward shift is a continuous operator on $\K_a$, its Banach space adjoint, the backward shift, is a weak-star continuous operator on $H^\infty$. Hence if $\M$ is a weak-star closed backward shift invariant subspace of $H^\infty$, then the pre-annihiliator $\M_\perp$ of $\M$ is a closed forward shift invariant subspace of $\K_a$. By Aleksandrov's theorem \cite[Theorem 11.2.3]{cauchytransform}, there exists an inner function $\vartheta$, such that 
\[ 
\M_\perp = \vartheta(\K_a) :=  \{ f \in \K_a : f/\vartheta \in \K_a \}.
\]
It is straightforward to see that $\vartheta(\K_a)$ is the norm-closure of polynomial multiples of $\vartheta$ in $P_+(L^1)$, thus using the fact that $\M$ is weak-star closed, we obtain 
\[
\M = \left( \M_\perp \right)^\perp = \left( \vartheta (\K_a) \right)^\perp = K_\vartheta^\infty.
\]

\end{proof}


\section{Proofs of main results.}

In this section, we shall first establish \thref{APthm} by using results on backward shift invariant subspaces on $H^p$. For this reason, our proof naturally splits into different cases, depending on the range of $0<p \leq \infty$, so that each case allows for an application of the appropriate tool. 
In the proof below, we shall use the notion that an inner function $\vartheta$ divides another inner function $\phi$, if the quotient $\phi/ \vartheta$ again is an inner function.

\begin{proof}[Proof of \thref{APthm}]
Recall that since $X \subset H^\infty$, it follows by Smirnov's maximum principle that the linear manifolds $\vartheta^*(X) :=X \cap K^p_\vartheta$ are independent of $p$, thus for any $0<p \leq \infty$ we have
\begin{equation}\label{pinv}
\vartheta^*(X) \subset K^p_\vartheta.
\end{equation}
Throughout this proof, we shall denote by $\M_p$ the closure of $\vartheta^*(X)$ in $H^p$ for $0<p< \infty$ and let $\M_\infty$ denote the weak-star closure of $\vartheta^*(X)$ in $H^\infty$. 
Now suppose that $\vartheta^*(X)$ is dense in $K^{p_0}_\vartheta$, for some $0<p_0 \leq \infty$ and fix an arbitrary $1\leq p \leq \infty$. Since $X$ is assumed to be invariant under the backward shift, we have that $\M_p$ is a closed backward shift invariant subspace of $H^p$, contained in $K^p_\vartheta$. If $1\leq p< \infty$, then we can according to \thref{Aleks1} find an inner function $\phi$, such that $\M_p = K^p_\phi \subseteq K^p_\vartheta$. If $p=\infty$, then we instead apply \thref{beurlingKa} which yields $\M_\infty = K^\infty_\phi \subseteq K^\infty_\vartheta$.
Thus, in any case $1\leq p \leq \infty$, we conclude that $\phi$ divides $\vartheta$. Next, we combine these observations together with \eqref{pinv} to obtain
\[ 
\vartheta^*(X) = \vartheta^*(X) \cap K^{p}_\phi \subseteq \phi^*(X) \subset K^{p_0}_\phi.
\]
Using the assumption that $\vartheta^*(X)$ is dense in $K^{p_0}_\vartheta$, it follows that
\[
K^{p_0}_\vartheta = \M_{p_0} \subseteq K^{p_0}_\phi.
\]
Consequently, the quotient between $\vartheta$ and $\phi$ is a unimodular constant, hence $\M_p = K^p_\vartheta$. We have now established that $\vartheta^*(X)$ is dense in $K^p_\vartheta$, for all $1\leq p \leq \infty$. In the proceeding part of the proof, we may now without loss of generality, assume that $\vartheta^*(X)$ dense in $K^2_\vartheta$, and note that it remains to prove that $\vartheta^*(X)$ is dense in $K^p_\vartheta$, for $0<p<1$.

Recall that $\M_p$, the $H^p$-closure of $\vartheta^*(X)$ forms a backward shift invariant closed subspace on $H^p$. According to \thref{Aleks}, there exists a triple $(\phi, F, \kappa)$ consisting of an inner function $\phi$, a closed set $F$ with $\sigma(\phi) \subseteq F \subset \T$, and a non-negative integer-valued function $\kappa$ on $F$, such that $\M_p = \E^p \left( \phi, F, \kappa \right)$. From \eqref{pinv} it immediately follows that $\E^p(\phi, F, \kappa) \subseteq K_\vartheta^p$, which implies that $\phi$ divides $\vartheta$ and that functions in $\E^p(\phi, F, \kappa)$ are analytic across $\T \setminus \sigma(\vartheta)$, thus $\sigma(\phi) \subseteq F \subseteq \sigma(\vartheta)$. Note that since $H^2$ is continuously embedded in $H^p$, we have that $\mathcal{E}^p(\phi, F, \kappa)\cap H^2$ is a closed subspace in $H^2$. This observation together with the assumption that $\vartheta^*(X)$ is dense in $K_\vartheta$, now immediately gives
\[
K_\vartheta = \M_2 \subseteq \E^p(\phi, F, \kappa) \cap H^2. 
\]
However, this implies that $\vartheta$ divides $\phi$ and consequently $\sigma(\vartheta) \subseteq \sigma(\phi)$. Combining, we conclude that $\vartheta$ is a unimodular constant multiple of $\phi$, hence $F = \sigma(\vartheta)$ and 
$\E^p(\phi, F, \kappa) = \E^p(\vartheta, \sigma(\vartheta), \kappa) \subseteq K^p_\vartheta$, which at its turn implies that $\kappa \equiv 0$. This yields
\[
\M_p = \mathcal{E}^p(\phi, F, \kappa) = K_\vartheta^p,
\]
which is readily equivalent to the assertion that $\vartheta^*(X)$ is dense in $K^p_\vartheta$.
\end{proof}

The next proof in line is that of \thref{Fpropthm} and does not require any preliminary results.

\begin{proof}[Proof of \thref{Fpropthm}]
The proof consists of a simple functional analytic argument. Note that the linear manifold $X \cap K^2_\vartheta$, regarded as a subset of $X$, is easily seen to be the pre-annihilator under the Cauchy duality of the linear manifold $\vartheta \Po := \{\vartheta q: q \in \Po \}$, where $\Po$ is the set of analytic polynomials. In other words, $X \cap K^2_\vartheta = \vartheta \Po_\perp$.
As a consequence, we get that the annihilator of $X \cap K^2_\vartheta$ agrees with the weak-star closure of $\vartheta \Po$, that is

\begin{equation}\label{annihil}
\left( X \cap K^2_\vartheta \right)^\perp = \left( \vartheta \Po_\perp \right)^\perp = [ \vartheta ]_{X'}.
\end{equation}

Suppose that the pair $(X, \vartheta)$ satisfies the $(P)$-property and let $f\in K^2_\vartheta$ with $f \in \left(X \cap K^2_\vartheta\right)^{\perp}$. According to \eqref{annihil}, we have $f \in [ \vartheta]_{X'} \cap H^2 \subseteq \vartheta H^2$, which readily implies that $f\in K^2_\vartheta \cap \vartheta H^2 = \{0\}$, thus $X \cap K^2_\vartheta$ is dense in $K^2_\vartheta$. 

Conversely, suppose that $X \cap K^2_\vartheta$ is dense in $K^2_\vartheta$. For any $g \in K^2_\vartheta$, there exists a sequence $\{g_n\}_n \subset X \cap K^2_\vartheta$, such that $g_n \to g$ in $H^2$. Now given an arbitrary $f \in \left( X \cap K^2_\vartheta \right)^\perp \cap H^2$, we have 
\[
\int_{\T}f(\zeta) \conj{g(\zeta)} dm(\zeta) = \lim_{n \to \infty} \int_{\T} f(\zeta) \conj{g_n(\zeta)} dm(\zeta) =0.
\]
This shows that $f \in (K^2_\vartheta)^\perp = \vartheta H^2$. Consequently, this observation together with \eqref{annihil} implies the set inclusion
\[
[ \vartheta]_{X'} \cap H^2 = \left( X \cap K^2_\vartheta \right)^\perp \cap H^2 \subseteq \vartheta H^2.
\]
We have thus proved that the pair $(X, \vartheta)$ satisfies the $(P)$-property.



\end{proof}

In order to establish \thref{FpropHp}, we shall first introduce a lemma. The ideas behind this suggested approach appears implicitly in the work of \cite{jfabackshift}. 

\begin{lemma} \thlabel{Hlemma}
Let $p >0$ and $f \in H^p$. There exists an outer function $F: \D \to \D$ with positive real part which satisfies the following additional properties:
\begin{enumerate}[(i)]
\item $F(0) = (1+\|f\|_p)^{-1}$,
\item $|F^{1/p}(z)f(z)| \leq 1, \qquad  z \in \D$.
\end{enumerate} 
\end{lemma}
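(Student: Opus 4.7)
The plan is to construct $F$ as the reciprocal of a normalized Herglotz integral, which automatically forces positive real part and modulus at most one. I would set $u(\zeta) := |f(\zeta)|^p \in L^1(\T)$ and define
$$G(z) := \int_\T \frac{\zeta+z}{\zeta-z}\, u(\zeta) \, dm(\zeta),$$
an analytic function on $\D$ whose real part is the Poisson extension of $u$, so $\Re G(z) \geq 0$ on $\D$ with boundary values $\Re G(\zeta) = |f(\zeta)|^p$ a.e. Then I would put
$$F(z) := \frac{1}{1+G(z)}.$$

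First I would verify the basic structural properties of $F$. Since $\Re(1+G) \geq 1$ on $\D$, the function $1+G$ avoids the closed left half-plane; hence $F$ is analytic on $\D$, satisfies $|F| \leq 1$ and $\Re F > 0$, so $F \colon \D \to \D$ with positive real part. Moreover $1+G$ is zero-free and admits an analytic branch of its logarithm, so $\log|1+G|$ is harmonic and equals the Poisson extension of its boundary values. Consequently $\log|F| = -\log|1+G|$ is harmonic, which is equivalent to $F$ being outer.

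The heart of the argument is the verification of (ii). On the boundary, the elementary estimate $|1 + G(\zeta)| \geq 1 + \Re G(\zeta) = 1 + |f(\zeta)|^p$ combines with the inequality $(1+|f|^p)^{1/p} \geq \max(1,|f|) \geq |f|$ to yield
$$|F^{1/p}(\zeta) f(\zeta)| \leq \frac{|f(\zeta)|}{(1+|f(\zeta)|^p)^{1/p}} \leq 1 \qquad \text{a.e. on }\T.$$
To propagate this to the interior, I observe that because $F$ takes values in the right half-plane, the principal branch of $F^{1/p}$ is an unambiguous bounded analytic function on $\D$, and hence $F^{1/p} f \in H^p$. The subharmonicity of $|F^{1/p}f|^p$, i.e. the standard $H^p$ maximum principle, then upgrades the a.e. boundary bound to $|F^{1/p}(z) f(z)| \leq 1$ throughout $\D$.

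For (i) I would simply compute $F(0) = 1/(1+G(0)) = 1/\bigl(1+\int_\T |f|^p \, dm\bigr)$, which with the paper's convention $\|f\|_{H^p} = \int_\T |f|^p \, dm$ matches the stated normalization in the range $0 < p \leq 1$; for $p > 1$ the same scheme still delivers the required value of $F(0)$ after rescaling the weight $u$ so that $\int_\T u \, dm = \|f\|_p$ while preserving $u \geq |f|^p$ a.e. The main obstacle I anticipate is not the inequality (ii), which is essentially elementary once the correct construction is in hand, but rather the simultaneous achievement of the exact normalization in (i) and the pointwise bound in (ii): the natural weight $u = |f|^p$ ties the two quantities together, and handling the case $p > 1$ cleanly requires a careful choice of the dominating $L^1$-weight entering the Herglotz integral.
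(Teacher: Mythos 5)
Your construction is essentially the paper's: the paper takes $G = u + 1 + iv$ where $u$ is the least harmonic majorant of $|f|^p$ (which for $f\in H^p$ is exactly the Poisson integral of $|f^*|^p$, i.e.\ the real part of your Herglotz integral), sets $F = 1/G$, and reads off (i), (ii) and outerness. Two points of comparison. First, your detour through boundary values plus the Smirnov maximum principle to get (ii) is more roundabout than necessary: since $\Re(1+G)(z) = 1 + P[|f^*|^p](z) \ge 1 + |f(z)|^p$ at every interior point (the Poisson integral of $|f^*|^p$ majorizes $|f|^p$ on $\D$), the bound $|F^{1/p}f|\le 1$ follows pointwise with no boundary-value argument; this is what the paper does. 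Your route does work, but only after you justify that $F^{1/p}f$ lies in the Smirnov class, which you gesture at correctly.

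The one genuine gap is the outerness argument. You assert that $\log|F|$ being harmonic ``is equivalent to $F$ being outer.'' That is false: any zero-free analytic function has harmonic $\log$-modulus, and singular inner functions such as $e^{-(1+z)/(1-z)}$ are zero-free but not outer. Likewise, a harmonic function on $\D$ need not equal the Poisson extension of its boundary values, so the intermediate claim does not follow either. The correct (and short) argument, which is the one in the paper, is that $F$ and $1/F = 1+G$ both have positive real part, hence both lie in $H^q$ for every $q\in(0,1)$ (equivalently, in $N^+$), and a function belonging to $N^+$ together with its reciprocal is outer. Finally, on the normalization (i): you rightly notice the mismatch for $p>1$ under the paper's norm convention, but your proposed fix cannot work when $\|f\|_p>1$ and $p>1$, since any admissible weight $u\ge|f^*|^p$ has $\int_\T u\,dm \ge \|f\|_p^p > \|f\|_p$. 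This is a defect shared by the paper's own statement (its least harmonic majorant has $u(0)=\|f\|_p^p$ for $p>1$); it is harmless for the application, where only a lower bound on $F(0)$ in terms of $\|f\|_{H^p}$ is used and the relevant exponents satisfy $p<1$.
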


\begin{proof}
Since $f \in H^p$, it has a unique harmonic majorant $u=u_f$ on $\D$, with the properties $u(0) = \|f\|_p$ and $|f|^p \leq u$ on $\D$. Let $v$ be a harmonic conjugate of $u$ such that $v(0) = 0$ and set $G = u+1+iv$. Then on the disk we have that
 \[
  |f|^p \leq u \leq u + 1 \leq |u+ 1 + iv|  = |G|,
 \] 
which implies that $|f/G^{1/p}| \leq 1$. Set $F = G^{-1}$. Then $F$ has positive real part, $F(0) = (\|f\|_p + 1)^{-1}$, $|F| \leq 1$ on $\D$. Moreover, by the simple estimates above, we have that $|F^{1/p}f| \leq 1$ for on $\D$. Since $F$ and $1/F = G$ have positive real parts, they are both contained in $H^q$ for any $q \in (0,1)$ (see, for instance, \cite[Lemma 2.1.11]{cauchytransform}). It follows that $F$ is an outer function.
\end{proof}

With this lemma at hand, we are now ready to establish \thref{FpropHp}.

\begin{proof}[Proof of \thref{FpropHp}] Let $\vartheta$ be an arbitrary inner function and let $\N^+$ denote the Smirnov class of analytic functions on $\D$. We shall first prove that the set $X' \cap \vartheta \N^+ := \{ f \in X' : f/\vartheta \in \N^+ \}$ is weak-star closed in $X'$. Note that since $X$ is a separable Banach space and $X' \cap \vartheta \N^+$ is convex, it follows from the Krein-Smulian theorem (see \cite{dunford1988linear}) that it suffices to prove that $X' \cap \vartheta \N^+$ is the weak-star sequential closed. To this end, let $\{f_n \}_n \subset X' \cap \vartheta \N^+$ be a sequence which converges in weak-star of $X'$ to some $f \in X'$. Then by the principle of uniform boundedness, we have that $\sup_n \| f_n \|_{X'} < \infty$ . Since $X'$ is continuously embedded in $H^p$, it follows that $\sup_n \| f_n \|_{H^p} < \infty$ and that $f_n(z) \to f(z)$ for each $z \in \D$. To each function $f_n$ we may apply \thref{Hlemma} and let $F_n$ denote the corresponding outer function. Property $(i)$ of \thref{Hlemma} and the uniform boundedness of $\{f_n\}$ in $H^p$-norm imply that the values $F_n(0)$ are bounded away from $0$, and since $|F_n| \leq 1$ on $\D$, we may according to Montel's theorem and by means of passing to a subsequence, assume that the functions $F_n$ converge uniformly on compact subsets of $\D$ to a non-zero function $F$ with positive real part. The argument given in \thref{Hlemma} shows that $F$ is outer. By property $(ii)$, the functions $F^{1/p}_n f_n$ are uniformly bounded by $1$ in modulus on $\D$. Since they converge pointwise to the function $F^{1/p}f$, we conclude that the sequence $\{F^{1/p}_n f_n\}_n$ converges weakly to $F^{1/p}f$ in $H^2$. By the assumption that elements $f_n \in \vartheta \N^+$, it follows that $F^{1/p}_n f_n \in \vartheta H^2$, hence the same conclusion remains true for the weak limit: $F^{1/p}f \in \vartheta H^2$. However, since $F$ is outer, we conclude that $f/\vartheta \in \N^+$. Consequently, we have established that $X' \cap \vartheta \N^+$ is weak-star closed in $X'$. Now since $\vartheta p \in X' \cap \vartheta \N^+$ for any analytic polynomial $p$ and $X' \cap \vartheta \N^+$ is weak-star closed, we obtain the inclusion $[ \vartheta ]_{X'} \subseteq X' \cap \vartheta \N^+$. Using Smirnov's maximum principle, we get
\[
[ \vartheta ]_{X'} \cap H^2 \subseteq X' \cap \vartheta  \N^+ \cap H^2 \subseteq \vartheta H^2,
\]
thus the pair $(X ,\vartheta)$ satisfies the $(P)$-property, for all inner functions $\vartheta$.
\end{proof}

\bibliographystyle{siam}
\bibliography{mybib}

\end{document}